\begin{document}

\newtheorem{theorem}{Theorem}[section]
\newtheorem{corollary}[theorem]{Corollary}
\newtheorem{definition}[theorem]{Definition}
\newtheorem{conjecture}[theorem]{Conjecture}
\newtheorem{question}[theorem]{Question}
\newtheorem{lemma}[theorem]{Lemma}
\newtheorem{proposition}[theorem]{Proposition}
\newtheorem{example}[theorem]{Example}
\newenvironment{proof}{\noindent {\bf
Proof.}}{\rule{3mm}{3mm}\par\medskip}
\newcommand{\remark}{\medskip\par\noindent {\bf Remark.~~}}
\newcommand{\pp}{{\it p.}}
\newcommand{\de}{\em}

\newcommand{\JEC}{{\it Europ. J. Combinatorics},  }
\newcommand{\JCTB}{{\it J. Combin. Theory Ser. B.}, }
\newcommand{\JCT}{{\it J. Combin. Theory}, }
\newcommand{\JGT}{{\it J. Graph Theory}, }
\newcommand{\ComHung}{{\it Combinatorica}, }
\newcommand{\DM}{{\it Discrete Math.}, }
\newcommand{\ARS}{{\it Ars Combin.}, }
\newcommand{\SIAMDM}{{\it SIAM J. Discrete Math.}, }
\newcommand{\SIAMADM}{{\it SIAM J. Algebraic Discrete Methods}, }
\newcommand{\SIAMC}{{\it SIAM J. Comput.}, }
\newcommand{\ConAMS}{{\it Contemp. Math. AMS}, }
\newcommand{\TransAMS}{{\it Trans. Amer. Math. Soc.}, }
\newcommand{\AnDM}{{\it Ann. Discrete Math.}, }
\newcommand{\NBS}{{\it J. Res. Nat. Bur. Standards} {\rm B}, }
\newcommand{\ConNum}{{\it Congr. Numer.}, }
\newcommand{\CJM}{{\it Canad. J. Math.}, }
\newcommand{\JLMS}{{\it J. London Math. Soc.}, }
\newcommand{\PLMS}{{\it Proc. London Math. Soc.}, }
\newcommand{\PAMS}{{\it Proc. Amer. Math. Soc.}, }
\newcommand{\JCMCC}{{\it J. Combin. Math. Combin. Comput.}, }
\newcommand{\GC}{{\it Graphs Combin.}, }

\title{Complete multipartite graphs are determined by their distance spectra \thanks{
 Supported by National Natural Science Foundation of China
(No. 11271256). \newline \indent $^{\dagger}$Correspondent author:
Xiao-Dong Zhang (Email: xiaodong@sjtu.edu.cn)}}
\author{  Ya-Lei Jin  and Xiao-Dong Zhang$^{\dagger}$   \\
{\small Department of Mathematics, and Ministry of Education }\\
{\small Key Laboratory of Scientific and Engineering Computing, }\\
{\small Shanghai Jiao Tong University} \\
{\small  800 Dongchuan road, Shanghai, 200240, P.R. China}\\
}

\maketitle
 \begin{abstract}
  It is well known that the complete multipartite graphs can not be determined by their
  adjacency spectra. But in this paper, we prove that they can be
  determined by their distance
  spectra, which confirms the conjecture  proposed by  Lin, Hong, Wang and
  Shu.
   \end{abstract}

{{\bf Key words:} Distance matrix; Complete multipartite graphs;
Cospectra; Symmetric function.
 }

      {{\bf AMS Classifications:} 05C50}.
\vskip 0.5cm

\section{Introduction}

 In this paper, we only consider simple and undirected connected graphs.
 Let $G=(V(G), E(G))$ be a graph of order $n$ with vertex
set $V(G)$ and edge set $E(G)$. Let $d_G(v)$ and $N_G(v)$ denote the
degree  and neighbors of a vertex $v $, respectively.
 $D(G)=(a_{uv})_{n \times n}$ denotes the distance matrix of $G$ with
$a_{uv}=d_G(u, v)$, where $d_G(u, v)$ is the distance between
vertices $u$ and $v$.  The largest eigenvalue of $D(G)$, denoted by
 $\lambda(G)$, is
called the distance spectral radius of $G$.
 The research for distance matrix can
be dated back to the paper \cite{Edelberg}, which presents an
interesting result that is the determinant of the distance matrix of
trees with order $n$ is always $(-1)^{n-1}(n-1)2^{n-2}$, independent
of the structure of the tree. Recently, the distance matrix of a
graph has received increasing attention. For example, Liu \cite{Liu}
characterized the graphs with minimal spectral radius of the
distance matrix in three classes of simple connected graphs  with
fixed vertex connectivity, matching number and chromatic number,
respectively. Zhang \cite{Zhang} determined the unique graph with
minimum distance spectral radius among all connected graphs  with a
given diameter. Bose, Nath and Paul \cite{Bose} characterized the
graph with minimal distance spectral radius among all graphs with the
fixed number of pendent vertices.

   Denote by $Sp(D(G))$ the set of all eigenvalues of $D(G)$ including
the multiplicity. Two graphs $G$ and $G'$ are called D-cospectral if
$Sp(D(G))=Sp(D(G'))$.  A graph $G$ is determined by its D-spectral
if $Sp(D(G))=Sp(D(G'))$  implies  $G\cong G'$. There are three
excellent surveys (\cite{Dam1}, \cite{Dam2}  and \cite{Dam3})  that
which graphs can be determined by their spectra.

 Lin et al.\cite{Lin2} characterized all
  the connected graphs with smallest eigenvalue is $-2$,
   and  proposed  the following conjecture:
\begin{conjecture}\label{C1}\cite{Lin2}
Let $G=K_{n_1,n_2,...,n_k}$ be a complete k-partite graph. Then $G$ is determined by its D-spectrum.
\end{conjecture}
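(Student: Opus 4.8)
\medskip
\noindent\textbf{Proof strategy.}
The plan rests on one structural observation about $G=K_{n_1,\dots,n_k}$, where $n=n_1+\cdots+n_k$: the matrix $D(G)+2I$ is positive semidefinite of rank exactly $k$. Indeed $G$ has diameter at most $2$, so $D(G)=2(J-I)-A(G)$ and hence $D(G)+2I=2J-A(G)$; writing the adjacency matrix of the complete multipartite graph as $A(G)=J-\bigoplus_{i=1}^{k}J_{n_i}$ (block-diagonal over the colour classes $V_1,\dots,V_k$), this becomes $D(G)+2I=J+\bigoplus_{i=1}^{k}J_{n_i}$, a sum of positive semidefinite matrices whose column space is spanned by the $k$ colour-class indicator vectors. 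Equivalently, the partition $\{V_1,\dots,V_k\}$ is equitable for $D(G)$ with quotient matrix $B$ given by $B_{ii}=2(n_i-1)$ and $B_{ij}=n_j$ for $i\ne j$, so $Sp(D(G))=Sp(B)\cup\{(-2)^{(n-k)}\}$, i.e.\ $-2$ with multiplicity $n-k$; and since $B+2I_k=(J_k+I_k)\,\mathrm{diag}(n_1,\dots,n_k)$ is a product of positive definite matrices, all eigenvalues of $B$ exceed $-2$. In particular the multiplicity of $-2$ in $Sp(D(G))$ is exactly $n-k$.

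First I would show that any $G'$ with $Sp(D(G'))=Sp(D(G))$ is itself complete multipartite. From the spectrum, $D(G')+2I$ is symmetric, positive semidefinite, and has $k$ nonzero eigenvalues, hence rank $k$. Since its diagonal entries are $2$, positive semidefiniteness of $2\times2$ principal submatrices forces $d_{G'}(u,v)\le2$ for all $u\ne v$, so $G'$ has diameter at most $2$, and $d_{G'}(u,v)=2$ exactly when $uv\notin E(G')$. If the complement $\overline{G'}$ had an induced path $a\,b\,c$, the $3\times3$ principal submatrix of $D(G')+2I$ on $\{a,b,c\}$ would have rows $(2,2,1)$, $(2,2,2)$, $(1,2,2)$, whose determinant is $-2<0$, contradicting positive semidefiniteness. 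Hence $\overline{G'}$ is $P_3$-free, i.e.\ a disjoint union of cliques, and so $G'=K_{n'_1,\dots,n'_{k'}}$.

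Next I would recover the part sizes. Applying the first paragraph to $G'$ gives $Sp(D(G'))=Sp(B')\cup\{(-2)^{(n-k')}\}$ with $-2\notin Sp(B')$; comparing the multiplicity of $-2$ forces $k'=k$, and cancelling the $n-k$ copies of $-2$ gives $Sp(B)=Sp(B')$. It then suffices to show that the characteristic polynomial of $B$ determines the multiset $\{n_1,\dots,n_k\}$. Put $y=x+2$, $\mathbf n=(n_1,\dots,n_k)^{T}$, and $p(y)=\prod_{i=1}^{k}(y-n_i)$. Since $B=\mathbf 1\mathbf n^{T}+\mathrm{diag}(n_1,\dots,n_k)-2I_k$, we have $xI_k-B=(yI_k-\mathrm{diag}(n_i))-\mathbf 1\mathbf n^{T}$, and the matrix determinant lemma together with $\sum_i \frac{n_i}{y-n_i}=-k+y\,p'(y)/p(y)$ gives
\[
\det(xI_k-B)=(k+1)\,p(y)-y\,p'(y).
\]
Writing $p(y)=\sum_{j=0}^{k}c_j y^{j}$, the coefficient of $y^{j}$ on the right equals $(k+1-j)c_j$, and $k+1-j\ge1$ for $0\le j\le k$; hence the coefficients $c_j$, and therefore the root multiset $\{n_i\}$ of $p$, are determined by $\det(xI_k-B)$. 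Consequently $\{n'_i\}=\{n_i\}$ and $G'\cong G$.

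The only genuinely non-routine point is this last step: a priori the eigenvalues of the quotient matrix $B$ have no visible relation to the $n_i$, and the crux is the identity $\det(xI-B)=(k+1)\,p(x+2)-(x+2)\,p'(x+2)$ — obtained by factoring $\prod_i(y-n_i)$ out of the rank-one update determinant — together with the observation that no scaling factor $k+1-j$ vanishes. It then remains only to check the degenerate cases: repeated part sizes cause no trouble since the relations above are polynomial identities, and the case $k=n$ (that is, $G=K_n$) works too — there $-2$ has multiplicity $0$, $\overline{G'}$ is edgeless, and the last step returns ``all parts of size $1$''.
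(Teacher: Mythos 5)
Your proof is correct, but it takes a genuinely different route from the paper's in both of its two stages. For the structural step, the paper simply cites the theorem of Lin, Hong, Wang and Shu (Theorem~\ref{Lmain}) that the least distance eigenvalue being $-2$ with multiplicity $n-k$ characterizes complete $k$-partite graphs, whereas you re-derive it from scratch: positive semidefiniteness of $D(G')+2I$ applied to $2\times 2$ and $3\times 3$ principal submatrices forces diameter at most $2$ and a $P_3$-free complement, and the equitable-partition/rank computation pins the multiplicity at exactly $n-k$. For the harder step of recovering the multiset $\{n_1,\dots,n_k\}$, the paper works with the coefficients $\xi_i$ of the nontrivial factor of the characteristic polynomial, shows each $\xi_i$ is a symmetric function whose leading power-sum coefficient is the nonzero constant $(-1)^i(i+1)$ (Lemma~\ref{XS}), inverts this triangular system to recover the power sums $\psi_1,\dots,\psi_k$ (Lemma~\ref{MA}), and then proves by an induction-plus-limit argument that the first $k$ power sums determine the multiset (Lemma~\ref{maintheorem}). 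You instead write the same factor in the variable $y=\lambda+2$ as $(k+1)p(y)-y\,p'(y)$ with $p(y)=\prod_i(y-n_i)$ (your identity is consistent with Theorem~\ref{LK}, since $\sum_i n_i\prod_{j\neq i}(y-n_j)=-k\,p(y)+y\,p'(y)$) and observe that the coefficient of $y^j$ is $(k+1-j)c_j$ with $k+1-j\geq 1$, so $p$ and hence its root multiset can be read off directly. Your route buys a self-contained proof and bypasses the symmetric-function machinery entirely, replacing the paper's three lemmas by a single determinant identity; the paper's route, while longer, isolates the general principle that the power sums $\psi_1,\dots,\psi_k$ determine a nonnegative $k$-tuple up to permutation, which is reusable elsewhere. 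I see no gap in your argument.
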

Moreover, they proved that the conjecture is true for $k=2$. On the
other hand, it is well known that complete multipartite graphs can
not be determined by their spectra. For example, $K_{1,4}$ is not
determined by its spectra, since $K_{1,4}$ and the union of cycle of
order 4 and an isolated vertex have the same spectrum but not isomorphic.  In this paper, we will give a
positive answer for this conjecture.
\section{\large\bf{Main Results}}

Before the proof of the conjecture, we need the following theorems and lemmas.
\begin{theorem}\label{Lmain}\cite{Lin2}
Let $G$ be a connected graph and $D$ be the distance matrix of $G$.
Then the smallest eigenvalue of $D(G)$ is equal to $-2$ with
multiplicity $n-k$ if and only if $G$
 is a complete k-partite graph for $2\le k\le n-1$.
\end{theorem}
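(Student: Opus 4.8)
The plan is to prove the two directions of the equivalence separately, establishing the ``if'' part first so that it can be invoked inside the ``only if'' part. Write $I$ for the identity matrix, $J$ for the all-ones matrix, and $M=D(G)+2I$. The hypothesis that $-2$ is the smallest eigenvalue of $D(G)$ with multiplicity $n-k$ is equivalent to the statement that $M$ is positive semidefinite of rank $k$, its kernel being the eigenspace of $D(G)$ for $-2$.

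For the ``if'' direction, let $G=K_{n_1,\dots,n_k}$ with $k\ge 2$, so two vertices in a common part are at distance exactly $2$ and two vertices in different parts are at distance $1$. Writing $\bar G=K_{n_1}\cup\cdots\cup K_{n_k}$ for the complement and $A(\bar G)$ for its adjacency matrix, one has $D(G)=J-I+A(\bar G)$, hence
\[
M = D(G)+2I = J + \bigl(I+A(\bar G)\bigr).
\]
Now $I+A(\bar G)$ is the block-diagonal matrix whose diagonal blocks are the all-ones matrices $J_{n_1},\dots,J_{n_k}$, so both summands above are positive semidefinite; therefore $M\succeq 0$ and $\lambda_{\min}(D(G))\ge -2$. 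To compute the multiplicity exactly, I would use that for positive semidefinite $P,Q$ one has $\ker(P+Q)=\ker P\cap\ker Q$, so the column space of $P+Q$ is the sum of the column spaces of $P$ and $Q$. Here the column space of $I+A(\bar G)=\bigoplus_i J_{n_i}$ is spanned by the part-indicator vectors $\mathbf{1}_{V_1},\dots,\mathbf{1}_{V_k}$, which already contains $\mathbf{1}$, the column space of $J$; hence $\mathrm{rank}(M)=k$. Since $k\le n-1$, it follows that $-2$ is an eigenvalue of $D(G)$, that it is the smallest one, and that its multiplicity equals $n-k$.

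For the ``only if'' direction, assume $M=D(G)+2I\succeq 0$ with rank $k$, where $2\le k\le n-1$. Each $2\times2$ principal submatrix of $M$ has the form $\left(\begin{array}{cc} 2 & d_G(u,v)\\ d_G(u,v) & 2\end{array}\right)$ and must itself be positive semidefinite, which forces $d_G(u,v)\le 2$; thus $G$ has diameter at most $2$, and $D(G)=2(J-I)-A(G)$. Next, the complement $\bar G$ cannot contain an induced path $a$--$b$--$c$: if it did, then in $G$ we would have $a\sim c$, $a\not\sim b$ and $b\not\sim c$, so the $3\times3$ principal submatrix of $M$ on $\{a,b,c\}$ would equal $\left(\begin{array}{ccc} 2 & 2 & 1\\ 2 & 2 & 2\\ 1 & 2 & 2\end{array}\right)$, whose determinant is $-2<0$, contradicting $M\succeq 0$. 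Hence $\bar G$ has no induced $P_3$, so $\bar G$ is a disjoint union of cliques, and therefore $G$ is a complete $\ell$-partite graph for some $\ell$; connectedness of $G$ forces $\ell\ge 2$. Applying the ``if'' direction already proved to $G=K_{m_1,\dots,m_\ell}$ shows that $-2$ has multiplicity $n-\ell$ as an eigenvalue of $D(G)$; comparing with the hypothesis gives $\ell=k$, so $G$ is complete $k$-partite, as desired.

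The genuinely delicate point is the multiplicity bookkeeping in the ``if'' direction: producing $n-k$ eigenvectors for $-2$ is immediate, but one must also rule out the existence of any others, and this is exactly what the rank computation for the sum of the two positive semidefinite matrices $J$ and $I+A(\bar G)$ provides. Everything else — the matrix identity $D(G)=J-I+A(\bar G)$ for complete multipartite graphs, and the two small determinant evaluations in the ``only if'' direction — is routine, and I anticipate no real difficulty there.
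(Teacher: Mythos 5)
Your proposal is correct and complete. Note that the paper itself contains no proof of Theorem~\ref{Lmain}: it is stated as a quoted result from \cite{Lin2}, so there is no internal argument to compare yours against; your proof stands on its own. Checking the steps: the identity $D(G)=J-I+A(\overline{G})$ for a complete multipartite graph with $k\ge 2$ is right, so $M=D(G)+2I=J+(I+A(\overline{G}))$ is a sum of two positive semidefinite matrices, and the standard fact that $\ker(P+Q)=\ker P\cap\ker Q$ for positive semidefinite $P$, $Q$ gives $\mathrm{rank}(M)=k$ exactly (the column space is spanned by the $k$ linearly independent part-indicator vectors, which already contain $\mathbf{1}$), so the multiplicity of $-2$ is exactly $n-k$ rather than merely at least $n-k$ --- this is the delicate point and you handle it correctly. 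In the converse, the $2\times 2$ principal minor does force diameter at most $2$, and the $3\times 3$ principal submatrix you exhibit for an induced $P_3$ in $\overline{G}$ indeed has determinant $-2$, so $\overline{G}$ is $P_3$-free, hence a disjoint union of cliques, making $G$ complete multipartite; feeding the already-proved ``if'' direction back in identifies the number of parts as $k$. I see no gaps.
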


\begin{theorem}\label{LK}\cite{Lin2}
Let $G=K_{n_1,...,n_k}$ be a complete k-partite graph. Then the Characteristic polynomial of $D(G)$ is
$$P_D(\lambda)=(\lambda+2)^{n-k}\left[\prod_{i=1}^k(\lambda-n_i+2)-
\sum_{i=1}^kn_i\prod_{j=1,j\neq i}^k(\lambda-n_j+2)\right].$$
\end{theorem}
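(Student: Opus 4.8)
The plan is to compute the characteristic polynomial $P_D(\lambda)=\det(\lambda I_n-D)$ directly from the block structure of the distance matrix. Order the vertices so that the first part occupies the coordinates $1,\dots,n_1$, the second part the next $n_2$ coordinates, and so on. Since two distinct vertices of a complete multipartite graph are at distance $1$ when they lie in different parts and at distance $2$ when they lie in the same part, a block-by-block inspection gives the identity
$$ D \;=\; J_n-2I_n+\bigoplus_{i=1}^{k}J_{n_i}, $$
where $J_m$ denotes the $m\times m$ all-ones matrix.

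Next I would exploit the fact that the partition of $V(G)$ into its $k$ parts is an equitable partition of $D$. Let $W\subseteq\mathbf{R}^n$ be the $k$-dimensional subspace spanned by the part-indicator vectors $\mathbf{1}_{V_1},\dots,\mathbf{1}_{V_k}$. From the displayed identity one computes $D\,\mathbf{1}_{V_j}=n_j\mathbf{1}_n+(n_j-2)\mathbf{1}_{V_j}$, so $W$ is $D$-invariant and, in the basis $(\mathbf{1}_{V_1},\dots,\mathbf{1}_{V_k})$, the restriction $D|_W$ is represented by the $k\times k$ matrix $B$ with $B_{jj}=2(n_j-1)$ and $B_{ij}=n_j$ for $i\neq j$. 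On the orthogonal complement $W^{\perp}$, which consists of the vectors summing to zero on every part, each summand $J_{n_i}$ and the matrix $J_n$ act as zero, so $D$ acts on $W^{\perp}$ as $-2I$; and since $D$ is symmetric, $W^{\perp}$ is $D$-invariant as well. Hence
$$ P_D(\lambda)=(\lambda+2)^{n-k}\,\det(\lambda I_k-B), $$
in agreement with Theorem \ref{Lmain}, and it remains only to evaluate the $k\times k$ determinant.

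For that step, set $N=\mathrm{diag}(n_1,\dots,n_k)$ and $\mathbf{v}=(n_1,\dots,n_k)^{T}$, so that $B=(N-2I_k)+\mathbf{1}_k\mathbf{v}^{T}$ and therefore $\lambda I_k-B=C-\mathbf{1}_k\mathbf{v}^{T}$, where $C=\mathrm{diag}(\lambda-n_1+2,\dots,\lambda-n_k+2)$. Whenever $C$ is invertible, the matrix determinant lemma for a rank-one update gives
$$ \det\bigl(C-\mathbf{1}_k\mathbf{v}^{T}\bigr)=\det(C)\Bigl(1-\mathbf{v}^{T}C^{-1}\mathbf{1}_k\Bigr)=\prod_{i=1}^{k}(\lambda-n_i+2)-\sum_{i=1}^{k}n_i\prod_{j=1,\,j\neq i}^{k}(\lambda-n_j+2). $$
Both sides are polynomials in $\lambda$ that agree on the cofinite set where $C$ is invertible, hence they agree identically; multiplying by $(\lambda+2)^{n-k}$ yields the claimed formula.

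There is no deep obstacle in this argument. The step that requires the most care is the factorization $P_D(\lambda)=(\lambda+2)^{n-k}\det(\lambda I_k-B)$, that is, checking cleanly that $\mathbf{R}^n=W\oplus W^{\perp}$ is a $D$-invariant orthogonal decomposition and that $D|_{W^{\perp}}=-2I$; once the identity $D=J_n-2I_n+\bigoplus_i J_{n_i}$ is in place this is immediate, and it also recovers one direction of Theorem \ref{Lmain}. The only other point deserving a remark is the degenerate case of the rank-one determinant formula when $\lambda-n_i+2=0$ for some $i$, which the polynomial-identity argument above absorbs.
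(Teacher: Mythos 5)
Your proof is correct. The paper itself offers no argument for this theorem --- it is quoted from Lin, Hong, Wang and Shu \cite{Lin2} --- so there is nothing internal to compare against; but every step of your derivation checks out: the identity $D=J_n-2I_n+\bigoplus_i J_{n_i}$ correctly encodes the distances (for $k\ge 2$, so that same-part vertices are at distance $2$ and the graph is connected), the decomposition $\mathbf{R}^n=W\oplus W^{\perp}$ into the span of the part-indicators and its orthogonal complement is $D$-invariant with $D|_{W^{\perp}}=-2I$, the quotient matrix $B$ has the entries you state, and the rank-one determinant computation (with the polynomial-identity argument covering the degenerate values of $\lambda$) yields exactly the bracketed factor. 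This is essentially the standard equitable-partition proof and, as you note, it also recovers the ``if'' direction of Theorem \ref{Lmain}.
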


On symmetric function, we follow notations in the Chapter $7$ of the
book \emph{Enumerative Combinatorics}\cite{Stanley}.  For any given
nonnegative vectors $a=(a_1,...,a_k),x=(x_1,...,x_k)$, $c_i$ is a
real number and $x^a=x_1^{a_1}x_2^{a_2}...x_k^{a_k}$.
\begin{definition}
Let $x=(x_1,x_2,...,x_k)\in R^k$, $f(x)$ is called the symmetric function of degree $d$ if
$$f(x_1,...,x_k)=\sum_{(a_1,...,a_k)\in \mathcal{V}} c_{a_1,...,a_k} x_1^{a_1}...x_k^{a_k}$$
where $\mathcal{V}$ is the set of $k$ dimensional nonnegative integer vectors, $ c_{a_1,...,a_k} \in R$ and $f(x)$ satisfies the following conditions:\\
(a)$f(x_1,...,x_i,...,x_j,...,x_k)=f(x_1,...,x_j,...,x_i,...,x_k),1\le i<j\le k$;\\
(b) $d=\max_{a\in \mathcal{V}}\{a_1+a_2+...+a_k\}$.
\end{definition}
Let
$$\psi_t(x)=\sum_{j=1}^kx_j^t.$$  It is well known that
\begin{theorem}\label{DT}(\cite{Stanley})
For any symmetric function $f(x)$, there exists a unique function
$F(x)=\sum_{a\in \mathcal{V}} c_a x^a$ ,where $\mathcal{V}$ is the
subset of $k$ dimensional nonnegative integer vectors, such that
$f(x)=F(\psi_1(x),\psi_2(x),...,\psi_k(x))$.
\end{theorem}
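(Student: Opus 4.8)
The plan is to establish Theorem~\ref{DT} in two independent parts: \emph{existence}, that every symmetric polynomial in $x_1,\dots,x_k$ is a polynomial in the power sums $\psi_1,\dots,\psi_k$, and \emph{uniqueness}, that $\psi_1,\dots,\psi_k$ admit no nontrivial polynomial relation. I would route both parts through the elementary symmetric polynomials $e_i=\sum_{1\le j_1<\cdots<j_i\le k}x_{j_1}\cdots x_{j_i}$ $(1\le i\le k)$, together with Newton's identities linking the $e_i$ with the $\psi_t$.

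For existence I would first re-derive the classical statement that the ring of symmetric polynomials is exactly $\mathbb{R}[e_1,\dots,e_k]$. Order the monomials lexicographically; if $f$ is symmetric of degree $d$, its leading monomial is $c\,x_1^{\alpha_1}\cdots x_k^{\alpha_k}$ with $\alpha_1\ge\alpha_2\ge\cdots\ge\alpha_k\ge 0$ (the exponent vector may be taken weakly decreasing by symmetry), and a short computation shows that $c\,e_1^{\alpha_1-\alpha_2}e_2^{\alpha_2-\alpha_3}\cdots e_{k-1}^{\alpha_{k-1}-\alpha_k}e_k^{\alpha_k}$ has the identical leading monomial; subtracting it yields a symmetric polynomial of degree $\le d$ with a strictly smaller leading monomial, and the process halts because only finitely many monomials of degree $\le d$ exist. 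This produces $f=P(e_1,\dots,e_k)$. Next I would invoke Newton's identities
$$\psi_m-e_1\psi_{m-1}+e_2\psi_{m-2}-\cdots+(-1)^{m-1}e_{m-1}\psi_1+(-1)^{m}m\,e_m=0,\qquad 1\le m\le k,$$
and solve them recursively for $e_1,e_2,\dots,e_k$; since the recursion divides only by the integers $1,2,\dots,k$, which is permitted because $\mathbb{Q}\subseteq\mathbb{R}$, each $e_m$ becomes a polynomial in $\psi_1,\dots,\psi_m$. Substituting these into $P$ gives the desired $F$ with $f=F(\psi_1,\dots,\psi_k)$, and tracking weighted degrees (with $\psi_t$ of weight $t$) shows the construction does not increase degree.

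For uniqueness it is enough to prove that $\psi_1,\dots,\psi_k$ are algebraically independent over $\mathbb{R}$, for then $F_1(\psi_1,\dots,\psi_k)=F_2(\psi_1,\dots,\psi_k)$ in $\mathbb{R}[x_1,\dots,x_k]$ forces $F_1=F_2$ as polynomials in $k$ indeterminates. I would argue by transcendence degree: $\mathbb{R}(x_1,\dots,x_k)$ has transcendence degree $k$ over $\mathbb{R}$, and each $x_j$ is a root of $\prod_{i=1}^k(T-x_i)=T^{k}-e_1T^{k-1}+e_2T^{k-2}-\cdots+(-1)^{k}e_k$, so $\mathbb{R}(x_1,\dots,x_k)$ is algebraic over $\mathbb{R}(e_1,\dots,e_k)$, whence $e_1,\dots,e_k$ are algebraically independent. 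Since Newton's identities (in the form $\psi_t=e_1\psi_{t-1}-e_2\psi_{t-2}+\cdots+(-1)^{k-1}e_k\psi_{t-k}$ for $t>k$) likewise express every $\psi_t$ as a polynomial in $e_1,\dots,e_k$, any algebraic relation among the $\psi_t$ would produce one among the $e_i$, so the $\psi_t$ are algebraically independent as well. (Alternatively one can avoid field theory: by the conjugation bijection on partitions, the monomials $\psi_1^{b_1}\cdots\psi_k^{b_k}$ with $\sum_t t\,b_t\le d$ are exactly as numerous as a basis of the symmetric polynomials of degree $\le d$, so the spanning just established is automatically linear independence.) The only delicate point, and essentially the only place the argument can fail, is the coefficient arithmetic in Newton's identities, which divides by $1,\dots,k$ and hence genuinely requires the base ring to contain $\mathbb{Q}$ (the analogue is false over fields of small positive characteristic); the lexicographic bookkeeping in the existence step and the transcendence-degree count in the uniqueness step are otherwise completely routine.
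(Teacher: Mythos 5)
The paper does not prove Theorem~\ref{DT} at all: it is quoted from Stanley's \emph{Enumerative Combinatorics} as a known fact, so there is no internal proof to compare against. Your argument is the standard textbook proof (fundamental theorem of symmetric polynomials via lexicographic leading terms, then Newton's identities to pass from the $e_i$ to the power sums $\psi_t$, using that $\mathbb{R}\supseteq\mathbb{Q}$ so one may divide by $1,\dots,k$), and it is essentially correct and complete; supplying it makes the paper more self-contained than the citation does.

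One sentence in your uniqueness argument is stated in the wrong direction. From a nontrivial relation $Q(\psi_1,\dots,\psi_k)=0$, substituting $\psi_t=P_t(e_1,\dots,e_k)$ yields $Q(P_1(e),\dots,P_k(e))=0$, and algebraic independence of the $e_i$ only tells you that $Q(P_1(y),\dots,P_k(y))$ vanishes as a polynomial in $y$; concluding $Q=0$ from that requires the $P_t$ to be algebraically independent, which is exactly what is being proved. The correct deduction uses the \emph{other} substitution, which your existence step already provides: each $e_m$ is a polynomial in $\psi_1,\dots,\psi_m$, so $\mathbb{R}(e_1,\dots,e_k)\subseteq\mathbb{R}(\psi_1,\dots,\psi_k)$, whence the latter field has transcendence degree exactly $k$ over $\mathbb{R}$ while being generated by the $k$ elements $\psi_1,\dots,\psi_k$, forcing those generators to be algebraically independent. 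Your parenthetical counting argument (conjugation of partitions matching the spanning monomials $\psi_1^{b_1}\cdots\psi_k^{b_k}$ of weighted degree at most $d$ against the dimension of the symmetric polynomials of degree at most $d$) is a valid independent repair, so the overall proof stands once that one sentence is rewritten.
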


\begin{lemma}\label{XS}
Let\begin{eqnarray*} g(\lambda)&:=&
\prod_{i=1}^k(\lambda-n_i+2)-\sum_{i=1}^kn_i\prod_{j=1,j\neq
i}^k(\lambda-n_j+2)\\
& :=& \sum_{i=0}^k\xi_i\lambda^{k-i}
\end{eqnarray*}
 with $\sum_{i=1}^kn_i=n.$ Then there exists a unique function
 $F^{(i)}(y)=\sum_{a\in \mathcal{V}_i} c_a y^a$ with $y=(y_1,\cdots,
 y_i)$
    such that
$$\xi_i=F^{(i)}(\psi_1(n_1,...,n_k),\psi_2(n_1,...,n_k),...,\psi_i(n_1,...,n_k)),
\ \ i=1,\cdots, k,$$
 where $\mathcal{V}_i$ is the subset of $i$ dimensional nonnegative integer
 vectors. Moreover, there are some nonzero constants $c_i$
such that $F^{(i)}(y_1,y_2,...,y_i)-c_iy_i$  is a function on
variables $y_1, \cdots, y_{i-1}$ for $i=1,\cdots, k$, i.e., $
F^{(i)}(y_1,y_2,...,y_i)=c_iy_i+G^{(i)}(y_1, \cdots, y_{i-1})$.
\end{lemma}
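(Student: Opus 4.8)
The plan is to first rewrite $g(\lambda)$ so that its coefficients become elementary symmetric polynomials in $n_1,\ldots,n_k$. Put $\mu=\lambda+2$ and $P(\mu)=\prod_{j=1}^k(\mu-n_j)$. Since $P'(\mu)=\sum_{i=1}^k\prod_{j\neq i}(\mu-n_j)$ and $\sum_{i=1}^k(\mu-n_i)\prod_{j\neq i}(\mu-n_j)=kP(\mu)$, writing $n_i=\mu-(\mu-n_i)$ yields
$$\sum_{i=1}^kn_i\prod_{j\neq i}(\mu-n_j)=\mu P'(\mu)-kP(\mu),$$
so $g(\lambda)=P(\mu)-\bigl(\mu P'(\mu)-kP(\mu)\bigr)=(k+1)P(\mu)-\mu P'(\mu)$. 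Expanding $P(\mu)=\sum_{i=0}^k(-1)^i e_i\,\mu^{k-i}$, where $e_i=e_i(n_1,\ldots,n_k)$ is the $i$-th elementary symmetric polynomial and $e_0=1$, we obtain
$$g(\lambda)=\sum_{i=0}^k(-1)^i(i+1)\,e_i\,(\lambda+2)^{k-i}.$$

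Next I would read off $\xi_i$, the coefficient of $\lambda^{k-i}$, by expanding each $(\lambda+2)^{k-i'}$; only indices $i'\le i$ contribute, giving
$$\xi_i=\sum_{i'=0}^i(-1)^{i'}(i'+1){k-i' \choose i-i'}2^{\,i-i'}\,e_{i'},\qquad 1\le i\le k.$$
Because $e_{i'}$ is homogeneous of degree $i'$ in $n_1,\ldots,n_k$ and the $i'=i$ summand has the nonzero coefficient $(-1)^i(i+1)$ (here ${k-i \choose 0}=1$), $\xi_i$ is a symmetric function of degree exactly $i$, with degree-$i$ homogeneous part $(-1)^i(i+1)e_i$. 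By Theorem~\ref{DT} there is a unique $F$ with $\xi_i=F(\psi_1,\ldots,\psi_k)$. On the other hand $\xi_i$ is a polynomial in $e_0,\ldots,e_i$, and by Newton's identities (\cite{Stanley}) each $e_{i'}$ with $i'\le i$ is a polynomial in $\psi_1,\ldots,\psi_{i'}$; substituting, $\xi_i$ becomes a polynomial in $\psi_1,\ldots,\psi_i$ alone, so by the uniqueness just quoted the variables $y_{i+1},\ldots,y_k$ do not appear in $F$, and $F=F^{(i)}$ is the desired (unique) function of $y_1,\ldots,y_i$.

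It remains to identify $c_i$. Among $e_0,\ldots,e_i$ the power sum $\psi_i$ occurs only in $e_i$, and Newton's identity $i\,e_i=\sum_{j=1}^i(-1)^{j-1}e_{i-j}\psi_j$ shows it occurs there linearly: $e_i$ equals $\frac{(-1)^{i-1}}{i}\psi_i$ plus a polynomial in $\psi_1,\ldots,\psi_{i-1}$, since for $j<i$ the term involves $e_{i-j}$ with $i-j\le i-1$ and hence only $\psi_1,\ldots,\psi_{i-1}$. Plugging this into the displayed formula for $\xi_i$, the only monomial of $F^{(i)}$ containing $y_i$ is $c_iy_i$ with
$$c_i=(-1)^i(i+1)\cdot\frac{(-1)^{i-1}}{i}=-\frac{i+1}{i}\neq 0,$$
and every other monomial involves only $y_1,\ldots,y_{i-1}$; this is exactly the claimed decomposition $F^{(i)}(y_1,\ldots,y_i)=c_iy_i+G^{(i)}(y_1,\ldots,y_{i-1})$.

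The identity $g=(k+1)P-\mu P'$ and the binomial bookkeeping are routine; the point demanding care is the interaction with Theorem~\ref{DT} and Newton's identities, namely verifying that passing to power sums produces no $\psi_i$ from the lower terms $e_{i'}$ with $i'<i$ and that $\psi_i$ enters $e_i$ only to the first power, so that $c_i$ is a genuine nonzero constant independent of $n_1,\ldots,n_k$.
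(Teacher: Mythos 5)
Your proof is correct, and it reaches the same structural conclusion as the paper -- that the top-degree homogeneous part of $\xi_i$ is $(-1)^i(i+1)e_i$ -- but by a cleaner route. The paper expands the two products term by term, rewriting $n_l=(n_l-2)+2$, and lands on the formula $\xi_i=(-1)^i(i+1)e_i(n_1-2,\ldots,n_k-2)+(-1)^{i-1}2(k+1-i)e_{i-1}(n_1-2,\ldots,n_k-2)$; you instead substitute $\mu=\lambda+2$ and observe $g=(k+1)P(\mu)-\mu P'(\mu)$, which gives the coefficients of $g$ in the $\mu$-basis in one line and then recovers $\xi_i$ by binomial expansion. The two computations agree on the leading part, which is all that matters. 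More significantly, you make explicit two steps the paper leaves implicit: (i) that $F^{(i)}$ genuinely involves only $y_1,\ldots,y_i$, which you justify by writing each $e_{i'}$ as a polynomial in $\psi_1,\ldots,\psi_{i'}$ and appealing to the uniqueness in Theorem~\ref{DT}; and (ii) that the coefficient of $y_i$ is a nonzero \emph{constant}, which the paper asserts from the leading coefficient $(-1)^i(i+1)$ alone but which really requires the Newton-identity fact that $e_i=\frac{(-1)^{i-1}}{i}\psi_i+(\text{polynomial in }\psi_1,\ldots,\psi_{i-1})$, yielding your explicit value $c_i=-\frac{i+1}{i}$. Your version is therefore both a tidier derivation of the coefficient formula and a more complete justification of the final claim.
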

\begin{proof}
Clearly, $\xi_0=1,$ and $\xi_1=-2\psi_1(n_1,...,n_k)+2k$.
 For $1<i\le k$,  it follows from the definition that
\begin{eqnarray*}
\xi_i&=&(-1)^i\sum_{1\le j_1<j_2<...<j_i\le k}(n_{j_1}-2)(n_{j_2}-2)...(n_{j_i}-2)\\
&-&\sum_{l=1}^kn_l(-1)^{i-1}\sum_{1\le j_1<j_2<...<j_{i-1}\le
k,l\notin \{j_1,...,
j_{i-1}\}}(n_{j_1}-2)(n_{j_2}-2)...(n_{j_{i-1}}-2)\\
&=&(-1)^i\sum_{1\le j_1<j_2<...<j_i\le k}(n_{j_1}-2)(n_{j_2}-2)...(n_{j_i}-2)\\
&&+(-1)^{i}\sum_{l=1}^k\sum_{1\le j_1<j_2<...<j_{i-1}\le k,l\notin
\{j_1,
...,j_{i-1}\}}(n_{j_1}-2)(n_{j_2}-2)...(n_{j_{i-1}}-2)(n_l-2)\\
&&+2(-1)^{i-1}\sum_{l=1}^k\sum_{1\le j_1<j_2<...<j_{i-1}\le
k,l\notin
\{j_1,...,j_{i-1}\}}(n_{j_1}-2)(n_{j_2}-2)...(n_{j_{i-1}}-2).\\
&=&(-1)^i(i+1)\sum_{1\le j_1<j_2<...<j_i\le k}(n_{j_1}-2)(n_{j_2}-2)...(n_{j_i}-2)\\
&&+(-1)^{i-1}2(k+1-i)\sum_{1\le j_1<j_2<...<j_{i-1}\le
k}(n_{j_1}-2)(n_{j_2}-2)...(n_{j_{i-1}}-2).
\end{eqnarray*}
Hence  $\xi_i$ is a symmetric function  of degree $i$ on the variables
$(n_1,...,n_k)$ for $i=1, \cdots, k.$ Then, by Theorem \ref{DT},
there exists a unique function $F^{(i)}(y)=\sum_{a\in
\mathcal{V}_i}c_ay^a$ on $y=(y_1, \cdots, y_i)$ such that
$$\xi_i=F^{(i)}(\psi_1(n_1,...,n_k),\psi_2(n_1,...,n_k),...,\psi_i(n_1,...,n_k)).$$
Moreover, the coefficient in single term of $\xi_i$ with maximum
degree $i$ is $(-1)^i(i+1)$. Hence the coefficient of $F(y)=\sum_{a\in
\mathcal{V}_i}c_ay^a$ on variable $y_i$ is a constant. Hence  the
assertion holds.
\end{proof}

\begin{lemma}\label{MA}
Let\begin{eqnarray*} g(\lambda)&:=&
\prod_{i=1}^k(\lambda-n_i+2)-\sum_{i=1}^kn_i\prod_{j=1,j\neq
i}^k(\lambda-n_j+2)\\
& :=& \sum_{i=0}^k\xi_i\lambda^{k-i}
\end{eqnarray*}
 with $\sum_{i=1}^kn_i=n.$
 Then there exists a unique function
$H^{(i)}(y)=\sum_{a\in \mathcal{V}} c_a y^a$  such that
$\psi_i(n_1,...,n_k)=H^{(i)}(\xi_1(n_1, \cdots, n_k),\xi_2(n_1,
\cdots, n_k), \cdots, \xi_i(n_1, \cdots, n_k)$, where
$\mathcal{V}_i$ is the subset of $i$ dimensional nonnegative integer
vectors,$i=1, \cdots, k.$
\end{lemma}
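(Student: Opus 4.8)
The plan is to invert the triangular system furnished by Lemma~\ref{XS}. By that lemma, for each $i=1,\cdots,k$ there are a nonzero constant $c_i$ and a polynomial $G^{(i)}$ in $i-1$ variables with
$$\xi_i=c_i\,\psi_i(n_1,\cdots,n_k)+G^{(i)}\bigl(\psi_1(n_1,\cdots,n_k),\cdots,\psi_{i-1}(n_1,\cdots,n_k)\bigr);$$
for $i=1$ this is simply $\xi_1=-2\psi_1(n_1,\cdots,n_k)+2k$. Solving for $\psi_i$ gives
$$\psi_i(n_1,\cdots,n_k)=\frac{1}{c_i}\Bigl(\xi_i-G^{(i)}\bigl(\psi_1(n_1,\cdots,n_k),\cdots,\psi_{i-1}(n_1,\cdots,n_k)\bigr)\Bigr).$$

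I would establish existence by induction on $i$. For $i=1$, $\psi_1(n_1,\cdots,n_k)=k-\frac{1}{2}\xi_1=:H^{(1)}(\xi_1)$. Assume that for every $j<i$ we already have a polynomial $H^{(j)}$ with $\psi_j(n_1,\cdots,n_k)=H^{(j)}(\xi_1,\cdots,\xi_j)$. Substituting these into the displayed formula for $\psi_i$ yields
$$\psi_i(n_1,\cdots,n_k)=\frac{1}{c_i}\Bigl(\xi_i-G^{(i)}\bigl(H^{(1)}(\xi_1),\cdots,H^{(i-1)}(\xi_1,\cdots,\xi_{i-1})\bigr)\Bigr),$$
whose right-hand side is a polynomial in $\xi_1,\cdots,\xi_i$ with real coefficients (hence of the required form $\sum_{a\in\mathcal{V}_i}c_ay^a$ with $\mathcal{V}_i$ a set of nonnegative integer vectors); call it $H^{(i)}$. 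This closes the induction and proves existence.

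For uniqueness, the relations $\xi_i=c_i\psi_i+G^{(i)}(\psi_1,\cdots,\psi_{i-1})$ with $c_i\neq0$ describe an invertible (triangular) polynomial change of coordinates, so $\xi_1,\cdots,\xi_k$ generate the same subring of $\mathbb{R}[n_1,\cdots,n_k]$ as the power sums $\psi_1,\cdots,\psi_k$; in particular $\xi_1,\cdots,\xi_k$ are algebraically independent over $\mathbb{R}$. Hence if two polynomials $H^{(i)}$ and $\widetilde H^{(i)}$ both satisfy $\psi_i=H^{(i)}(\xi_1,\cdots,\xi_i)=\widetilde H^{(i)}(\xi_1,\cdots,\xi_i)$ as functions of $(n_1,\cdots,n_k)$, their difference is a polynomial in the algebraically independent elements $\xi_1,\cdots,\xi_i$ that vanishes identically, whence $H^{(i)}=\widetilde H^{(i)}$.

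The routine forward substitution is what carries the argument, and it goes through precisely because the leading constants $c_i$ supplied by Lemma~\ref{XS} are nonzero; the only point needing a moment's thought is the uniqueness step, where one must pass from ``vanishing for all admissible $(n_1,\cdots,n_k)$'' to ``being the zero polynomial'', and this is exactly where the algebraic independence of the $\xi_j$ (inherited from that of the power sums $\psi_j$) is invoked.
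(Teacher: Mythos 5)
Your proof is correct and follows essentially the same route as the paper: induct on $i$, use the decomposition $\xi_i=c_i\psi_i+G^{(i)}(\psi_1,\cdots,\psi_{i-1})$ from Lemma~\ref{XS} with $c_i\neq 0$ to solve for $\psi_i$, and substitute the previously constructed $H^{(j)}$. Your explicit justification of uniqueness via the algebraic independence of $\xi_1,\cdots,\xi_k$ (inherited from the power sums through the invertible triangular change of variables) is in fact more careful than the paper, which asserts uniqueness without comment.
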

\begin{proof}
We prove the assertion by the induction on $i$. For $i=1$, clearly,
there exists a function $H^{(1)}(y)=H^{(1)}(y_1)=-\frac{1}{2}y_1+k$
such that $\psi_1(n_1, \cdots,n_k)= -\frac{1}{2}\xi_1+k$ since
$\xi_1=-2\psi_1+2k$. Assume that the assertion holds for $i=1,
\cdots, t$. In other words, there exist $H^{(1)}(y), \cdots,
H^{(t)}(y)$ such that $\xi_j(n_1, \cdots, n_k)=H^{(j)}(\xi_1,
\cdots, \xi_j)$ for $j=1, \cdots, t$. For $i=t+1$, by
Lemma~\ref{XS}, there exists a unique function
$F^{(t+1)}(y)=\sum_{a\in \mathcal{V}_{t+1}}c_ay^a$ with $y=(y_1,
\cdots, y_{t+1})$ such that
\begin{eqnarray*}
\xi_{t+1}(n_1, \cdots, n_k)&=&F^{(t+1)}(\psi_1(n_1, \cdots, n_k),
\cdots, \psi_{t+1}(n_1, \cdots, n_k))\\
&=& c_{t+1}\psi_{t+1}(n_1, \cdots, n_k)+G^{(t+1)}(\psi_1(n_1,
\cdots, n_k), \cdots, \psi_t(n_1, \cdots, n_k)). \end{eqnarray*}
Hence
$$\psi_{t+1}(n_1, \cdots, n_k)=-\frac{1}{c_{t+1}}\xi_{t+1}(n_1, \cdots, n_k)-
G^{(t+1)}(\psi_1(n_1, \cdots, n_k), \cdots, \psi_t(n_1, \cdots,
n_k)).$$ Hence there exists a unique function
$H^{(t+1)}(y)=\sum_{a\in \mathcal{V}} c_a y^a$  such that
$\psi_{t+1}(n_1, \cdots, n_k)=H^{(i)}(\xi_1(n_1, \cdots,
n_k), \xi_2(n_1, \cdots, n_k), \cdots, \xi_{t+1}(n_1, \cdots, n_k)).$
So the assertion holds.
\end{proof}

\begin{lemma}\label{maintheorem}
For any given  vector $z=(z_1,z_2,...,z_k)$,  if the equations system
$$\psi_t(x)=z_t, \ \ i.e., \sum_{i=1}^kx_i^t= z_t, t=1, \cdots,  k$$
with unknown variables $x_1, \cdots, x_k$ has two nonnegative solutions with
$x=(x_1, \cdots, x_k)=(a_1, \cdots, a_k)$ and $x=(b_1,\cdots, b_k)$,
then there is a permutation $\sigma$ such that $a=(a_1, \cdots,
a_k)=(b_{\sigma(1)}, \cdots, b_{\sigma(n)})$.
\end{lemma}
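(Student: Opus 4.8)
The plan is to reduce this to the classical fact that the power sums $\psi_1,\dots,\psi_k$ carry the same information as the elementary symmetric polynomials $e_1,\dots,e_k$ in $k$ variables, and then to use that a monic polynomial of degree $k$ over $\mathbb{R}$ is determined by its coefficients and has exactly $k$ roots counted with multiplicity. First I would note that for each $t$ with $1\le t\le k$ the elementary symmetric polynomial $e_t(x_1,\dots,x_k)$ is a symmetric function of degree $t$, so by Theorem~\ref{DT} there is a (unique) polynomial $E^{(t)}$ with $e_t(x)=E^{(t)}(\psi_1(x),\dots,\psi_t(x))$; only $\psi_1,\dots,\psi_t$ can occur because $e_t$ has degree $t$. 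Applying this with $x=a$ and with $x=b$ and using the hypothesis $\psi_t(a)=\psi_t(b)=z_t$ for $t=1,\dots,k$, we obtain $e_t(a)=E^{(t)}(z_1,\dots,z_t)=e_t(b)$ for every $t=1,\dots,k$. (If one prefers to avoid Theorem~\ref{DT}, Newton's identity $\psi_t-e_1\psi_{t-1}+\cdots+(-1)^{t-1}e_{t-1}\psi_1+(-1)^{t}\,t\,e_t=0$ lets one solve recursively for $e_1,e_2,\dots,e_k$ from $\psi_1,\dots,\psi_k$, since the coefficient $(-1)^t t$ of $e_t$ is nonzero in $\mathbb{R}$, and this gives the same conclusion.)

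Next I would form the two monic polynomials
$$q_a(X)=\prod_{i=1}^{k}(X-a_i)=\sum_{t=0}^{k}(-1)^t e_t(a)\,X^{k-t},\qquad q_b(X)=\prod_{i=1}^{k}(X-b_i)=\sum_{t=0}^{k}(-1)^t e_t(b)\,X^{k-t},$$
where $e_0=1$. By the previous paragraph the coefficients of $q_a$ and $q_b$ coincide, so $q_a=q_b$ as elements of $\mathbb{R}[X]$. Since $\mathbb{R}[X]$ is a unique factorization domain (equivalently, a degree-$k$ polynomial has at most $k$ roots), the multiset of roots of $q_a$ equals that of $q_b$; that is, $\{a_1,\dots,a_k\}$ and $\{b_1,\dots,b_k\}$ are equal as multisets. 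Hence there is a permutation $\sigma$ of $\{1,\dots,k\}$ with $a_i=b_{\sigma(i)}$ for all $i$, which is the assertion.

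I do not expect a genuine obstacle here, since the statement is essentially a restatement of the fundamental theorem of symmetric functions specialized to numerical vectors. The only points that need a word of care are that Newton's identities (or Theorem~\ref{DT}) are invoked over $\mathbb{R}$, so that the coefficients $(-1)^t t$ used to isolate $e_t$ are invertible, and that exactly $k$ power sums suffice because $e_t$ has degree $t\le k$; the nonnegativity of $a$ and $b$ in the hypothesis is not actually needed for this argument.
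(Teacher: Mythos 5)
Your proof is correct, but it takes a genuinely different route from the paper's. The paper argues by induction on $k$: it first extends the hypothesis $\psi_t(a)=\psi_t(b)$ from $t=1,\dots,k$ to all $t\ge 1$ (via Theorem~\ref{DT}), then normalizes by the largest entry $a_1$ and lets $t\to\infty$ to force the two vectors to share a common entry, strips that entry off, and applies the induction hypothesis to the resulting $(k-1)$-variable system. That argument genuinely uses the nonnegativity of the solutions (to control the limits of $(b_i/a_1)^t$). Your argument instead converts the power sums $\psi_1,\dots,\psi_k$ into the elementary symmetric polynomials $e_1,\dots,e_k$ via Newton's identities (or Theorem~\ref{DT} plus a degree count), observes that the two monic polynomials $\prod_i(X-a_i)$ and $\prod_i(X-b_i)$ then have identical coefficients, and concludes from unique factorization in $\mathbb{R}[X]$ that the root multisets coincide. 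This is cleaner and more classical: it avoids the limiting argument entirely, needs no induction beyond what is hidden in Newton's identities, and, as you correctly note, does not use nonnegativity at all (it works verbatim over $\mathbb{C}$). The one step you should make explicit if you rely on Theorem~\ref{DT} rather than Newton's identities is why only $\psi_1,\dots,\psi_t$ occur in the expression for $e_t$; the algebraic independence of the power sums rules out cancellation among higher-degree terms, but the Newton's-identities route you offer in parentheses is airtight and is the better choice to write down.
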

\begin{proof}
 We prove the assertion by the induction on $k$.
For  $k=2$,   it is easy to the system $x_1+x_2=z_1,
x_1^2+x_2^2=z_2$ with unknown
variables $x_1, x_2$ has two solutions with
 $(x_1,x_2)=(\frac{z_1+\sqrt{4z_2-z_1^2}}{2},\frac{z_1-\sqrt{4z_2-z_1^2}}{2})$  and
  $(x_1, x_2)=(\frac{z_1+\sqrt{4z_2-z_1^2}}{2},\frac{z_1+\sqrt{4z_2-z_1^2}}{2})$.
  Hence the assertion holds.
 Assume that the assertion holds for the number less than $k$.
  Moreover, assume that the equations
system
$$\psi_t(x)=z_t, i=1, \cdots,  k$$
with unknown variables $x_1, \cdots, x_k$ has two nonnegative solutions with
$x=(x_1,\cdots,x_k)=(a_1, \cdots, a_k)$ and $x=(b_1,...,b_k)$. Then
we claim that there exist $1\le i, j\le k$ such that $a_i=b_j$. In
fact, otherwise, without loss of generality, assume that
$a_1=\cdots=a_p>a_{p+1}\ge\cdots\ge a_k$ and $a_1>b_1$. Then
 $\psi_t(a)=\psi_t(b)$ for $t=1, \cdots, k.$ Since $\psi_t(x)$ is
 symmetric function, by Theorem \ref{DT}, there exists a  unique
 function $F(x)$ such that $\psi_t(x)=F(\psi_1, \cdots, \psi_k)$ for all $t\ge 1$.
 Hence $\psi_t(a)=\psi_t(b)$ holds for all $t\ge 1$.
  Then
  $$p+\sum_{i=p+1}^k\left(\frac{a_i}{a_1}\right)^t=
  \sum_{i=1}^k\left(\frac{b_i}{a_1}\right)^t,$$
  for all $t\ge 1$.
  Therefore
$$p+\sum_{i=p+1}^k\lim_{t\rightarrow \infty}\left(\frac{a_i}{a_1}\right)^t=
\lim_{t\rightarrow
\infty}\sum_{i=1}^n\left(\frac{b_i}{a_1}\right)^t,$$ which implies
that $p=0$. It is impossible. So the claim holds. Therefore there
exist $1\le i, j\le k$ such that $a_i=b_j$. Moreover, it is easy to
see that the equations system $\sum_{l=1}^{k-1}x_l^t=z_t-a_i^t$ for
$t=1, \cdots, k-1$ has two solutions $(x_1, \cdots, x_{k-1})= (a_1,
\cdots, a_{i-1}, a_{i+1}, \cdots, a_k)$ and $(x_1, \cdots, x_{k-1})=
(b_1, \cdots, b_{j-1}, b_{j+1}, \cdots, b_k)$. By the induction
hypothesis, there exists a permutation $\sigma_1$ such that $(b_1,
\cdots, b_{j-1}, b_{j+1}, \cdots, b_k)=(a_{\sigma_1(1)}, \cdots
a_{\sigma_1(i-1)}, a_{\sigma_1(i+1)}, \cdots, a_{\sigma_1(k)})$.
Hence the assertion holds.
\end{proof}

Now we are ready to present our main theorem

\begin{theorem}\label{C}\cite{Lin2}
Let $G=K_{n_1, n_2, \cdots, n_k}$ be a complete $k-$partite graph
with $sum_{i=1}^kn_i=n$. Then $G$ is determined by its D-spectrum.
\end{theorem}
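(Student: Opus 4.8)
The plan is to show that every graph $G'$ with $Sp(D(G')) = Sp(D(G))$ is isomorphic to $G$, by recovering the relevant data from the spectrum in three stages: first the coarse structure (that $G'$ is again complete $k$-partite on $n$ vertices), then the power sums of the part sizes, and finally the multiset of part sizes itself.

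First I would read off the number of vertices: $Sp(D(G))$ contains $n$ eigenvalues counted with multiplicity, so $G'$ also has order $n$. If $2\le k\le n-1$, then by Theorem~\ref{Lmain} the least eigenvalue of $D(G)$ equals $-2$ with multiplicity exactly $n-k$, hence the same holds for $D(G')$, and Theorem~\ref{Lmain} (applied to $G'$) forces $G'=K_{m_1,\ldots,m_k}$ for some positive integers $m_1,\ldots,m_k$ with $\sum_{i=1}^k m_i=n$. The degenerate case $k=n$, i.e.\ $G=K_n$, has to be dispatched by hand: there $D(G)=J_n-I_n$, and for any D-cospectral $G'$ one has $\sum_i\lambda_i^2=\mathrm{trace}\,D(G')^2=\sum_{u\neq v}d_{G'}(u,v)^2\ge n(n-1)$ with equality if and only if all distances in $G'$ equal $1$, i.e.\ $G'=K_n$; since $K_n$'s spectrum attains this bound, $G'\cong K_n$.

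Next, in the main range $2\le k\le n-1$, Theorem~\ref{LK} gives $P_{D(G)}(\lambda)=(\lambda+2)^{n-k}g(\lambda)$ and $P_{D(G')}(\lambda)=(\lambda+2)^{n-k}g'(\lambda)$, where $g(\lambda)=\sum_{i=0}^k\xi_i(n_1,\ldots,n_k)\lambda^{k-i}$ and $g'(\lambda)=\sum_{i=0}^k\xi_i(m_1,\ldots,m_k)\lambda^{k-i}$. Since the characteristic polynomials coincide and the common factor $(\lambda+2)^{n-k}$ can be cancelled, we get $g=g'$, so $\xi_i(n_1,\ldots,n_k)=\xi_i(m_1,\ldots,m_k)$ for $i=0,1,\ldots,k$. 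Feeding these equalities into Lemma~\ref{MA} on both sides yields $\psi_i(n_1,\ldots,n_k)=H^{(i)}(\xi_1,\ldots,\xi_i)=\psi_i(m_1,\ldots,m_k)$ for $i=1,\ldots,k$; that is, the two tuples of part sizes share the same power sums of orders $1$ through $k$.

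Finally I would apply Lemma~\ref{maintheorem} with $z_t=\psi_t(n_1,\ldots,n_k)$: the nonnegative integer vectors $(n_1,\ldots,n_k)$ and $(m_1,\ldots,m_k)$ are both solutions of the system $\psi_t(x)=z_t$, $t=1,\ldots,k$, so there is a permutation $\sigma$ with $n_i=m_{\sigma(i)}$ for all $i$, whence $G=K_{n_1,\ldots,n_k}\cong K_{m_1,\ldots,m_k}=G'$. Once Theorems~\ref{Lmain}, \ref{LK} and Lemmas~\ref{MA}, \ref{maintheorem} are in hand, this is essentially bookkeeping; the only point requiring genuine care is the first step — verifying that D-cospectrality transports the ``complete $k$-partite on $n$ vertices'' structure to $G'$ and separately handling $k=n$ — since everything afterward is a formal chain $\text{spectrum}\Rightarrow\{\xi_i\}\Rightarrow\{\psi_i\}\Rightarrow$ multiset of part sizes.
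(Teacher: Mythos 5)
Your proof is correct and follows essentially the same route as the paper: use Theorem~\ref{Lmain} to transfer the complete $k$-partite structure to $G'$, read off the $\xi_i$ from the factor $g(\lambda)$ in Theorem~\ref{LK}, convert to power sums via Lemma~\ref{MA}, and conclude with Lemma~\ref{maintheorem}. Your separate treatment of the case $k=n$ (where $G=K_n$ has least distance eigenvalue $-1$, so Theorem~\ref{Lmain} does not apply) is a point the paper silently skips, and your trace argument handles it correctly.
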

\begin{proof}Let $G^{\prime}$ be any simple graph with
$sp(D(G^{\prime}))=Sp(D(G))$. Since $G$ is complete $k-$partite
graph, by Theorem~\ref{Lmain}, $G^{\prime}$ is complete $k-$partite
graphs, since the smallest eigenvalue of $D(G)$ and $D(G^{\prime})$
are equal to $-2$ with multiplicity $n-k$. Hence assume that
$G^{\prime}=K_{p_1, \cdots, p_k}$ with $\sum_{i=1}^kp_i=n$. Let the
function \begin{eqnarray*} f_i(x_1, \cdots, x_k)&:=&
(-1)^i(i+1)\sum_{1\le j_1<j_2<...<j_i\le k}(x_{j_1}-2)(x_{j_2}-2)...(x_{j_i}-2)\\
&+&(-1)^{i-1}2(k+1-i)\sum_{1\le j_1<j_2<...<j_{i-1}\le
k}(x_{j_1}-2)(x_{j_2}-2)...(x_{j_{i-1}}-2),
\end{eqnarray*}
for $i=1, \cdots, k$.  By Lemma~\ref{XS}, there exists a unique
function $H^{(i)}(y)$  with $y=(y_1, \cdots, y_i)$ such that
$$\psi_i(x)=\sum_{j=1}^kx_j^i=H^{(i)}(f_1(x_1, \cdots, x_k), \cdots,
f_i(x_1, \cdots, x_k))$$ for $i=1, \cdots, k$.
 Since  the coefficients of the characteristic polynomial of distance
 matrices of $D(G)$ and $D(G^{\prime})$ are equal, we have
 $g_G(\lambda)=g_{G^{\prime}}(\lambda)$. Hence
 $f_i(n_1, \cdots, n_k)=f_i(p_1, \cdots, p_k):=\theta_i$.
 Therefore
 $$\psi_i(n_1, \cdots, n_k)=H^{(i)}(f_1(n_1,\cdots, n_k), \cdots,
 f_i(n_1, \cdots,n_k))=H^{(i)}(\theta_1, \cdots, \theta_k)$$
 and
  $$\psi_i(p_1, \cdots, p_k)=H^{(i)}(f_1(p_1,\cdots, p_k), \cdots,
 f_i(p_1, \cdots,p_k))=H^{(i)}(\theta_1, \cdots, \theta_k)$$
 for $i=1, \cdots, k$. Hence the equations system
 $\psi_i(x_1, \cdots, x_k)=H^{(i)}(\theta_1, \cdots, \theta_k)$,
 $i=1, \cdots, k$ with unknown variables $x_1, \cdots, x_k$ has two
 solutions $(x_1, \cdots, x_k)=(n_1, \cdots, n_k)$ and
 $(x_1, \cdots, x_k)=(p_1, \cdots, p_k)$. By
 Lemma~\ref{maintheorem}, there exists a permutation $\sigma$ such
 that $(n_1, \cdots, n_k)=(p_{\sigma(1)}, \cdots, p_{\sigma(k)})$.
 Hence $G^{\prime}=K_{p_1, \cdots, p_k}=K_{n_1, \cdots, n_k}$. The
 assertion holds.
\end{proof}


\begin{thebibliography}{1}


\bibitem{1}J.~A.~Bondy and U.~S.~R.Murty, Graph theory with applications, Macmillan Press,
New York, 1976.

\bibitem{Bose} S.~S.~Bose, M.~Nath and S.~Paul, Distance spectral radius of graphs with r pendent
vertices, Linear Algebra and its Applications, 435 (2011) 2828-2836.

\bibitem{Dam1}E.~ R.~van Dam and W.~H.~Haemers, Which graphs are determined by their
spectra, Linear Algebra and its Applications, 373 (2003) 241-272.

\bibitem{Dam2}E.~R.~van Dam, W.~H.~Haemers and J.~H.~Koolen, Cospectral graphs and
the generalized adjacency matrix, Linear Algebra and its
Applications, 423 (2007) 33-41.
\bibitem{Dam3}E.~R.~van Dam and W.~H.~Haemers, Developments on spectral characteriza-
tions of graphs, Discrete Math., 309 (2009) 576-586.


\bibitem{Edelberg} M.~Edelberg, M.~R.~Garey and R.~L.~Graham,
 On the distance matrix of a tree, Discrete Math., 14 (1976) 23-29.

\bibitem{Ilic} A.Ili\'{c}, Distace spectral radius of trees given matching number,
Discrete Appl. Math., 158 (2010) 1799-1806.
\bibitem{Lin1} H.~Lin, W.~Yang, H.~Zhang and J.~Shu,
 Distance spectral radius of digraphs with given connectivity,
  Dicrete Math., 312 (2012) 1849-1856.
\bibitem{Lin2} H.~Lin, Y.~Hong, J.~Wang and J.Shu, on the spectrum of graphs,
Linear Algebra and its Applications, 439 (2013) 1662-1669.
\bibitem{Merris} R.Merris, The distance spectrum of a tree,
J. Graph Theory, 14 (1990) 365-369.

\bibitem{Liu} Z.~Liu, On spectral radius of the distance matrix,
 Appl. Anal. Discrete Math., 4 (2010), 269-277.



\bibitem{Stanley} R.~P.~Stanley. Enumerative combinatorics,
Vol. 2, volume 62 of Cambridge Studies in
Advanced Mathematics. Cambridge University Press, 1999.


\bibitem{Yu} G.~Yu, Y.~Wu, Y.~Zhang and J.~Shu,
Some graft transformaions and its application on distance spectrum,
 Discrete Math., 311 (2011) 2117-2123.


\bibitem{Zhang} X.Zhang, On the distance spectral radius of some
graphs, Linear Algebra and its Applications,  437 (2012) 1930-1941.

\end{thebibliography}
\end {document}